\newtheorem{theorem}{Theorem}[section]
\newtheorem{corollary}[theorem]{Corollary}
\newtheorem{lemma}[theorem]{Lemma}
\newenvironment{proof}[1][Proof]{\textbf{#1.} }{\ \rule{0.5em}{0.5em}}
\title{On a new  closed formula for  the solution of second order linear difference equations and applications}
\author{Issam Kaddoura \ \ \ \ \ \ \\{\small Department of Mathematics,  Faculty  of Arts and Science, Lebanese International
University, Saida, Lebanon} \and Bassam Mourad\\{\small Department of Mathematics,  Faculty of Science, Lebanese University, Beirut, Lebanon}}
\begin{document}

\maketitle

\begin{abstract} In this note, we establish a new closed formula for  the solution of homogeneous second order linear difference equations with constant coefficients by using matrix theory. This in turn gives new closed formulas concerning all sequences of this type such as the Fibonacci and Lucas sequences.  As applications; we show that Binet's formula in this case is valid for negative integers as well. Then,  we  find new summation formulas relating the elements of such sequences. Finally,  Maple simulations that show the computational advantages of our methods against Binet's formula are presented.
\end{abstract}

\paragraph*{keywords.}{\footnotesize   difference equations; Fibonacci sequence; Lucas sequence;
Tchebychev polynomials; Pell sequence; Generalized Binet's formula}
\paragraph*{AMS.}  {\footnotesize 11B39, 65Q30}

\section{Introduction}
For any $(n,z)\in \mathbb{Z}\times \mathbb{C}$ where $\mathbb{Z}$ is the usual ring of
integers and $\mathbb{C}$ is the field of complex numbers,  we call a
\textit{generalized linear second order recurrent sequence} any sequence which is given by the following second order linear difference equation:
\begin{eqnarray} R_{n+1}(z)=f(z)R_n(z)+g(z)R_{n-1}(z), \ R_0(z)=h(z), \ R_1(z)=k(z)\end{eqnarray}  where $f,g,$ $h$ and $k$
are any complex functions\footnote{\scriptsize{Note that  $f,g,$ $h$ and $k$ can also be considered as multi-variables functions, however this obviously will not
 affect the results.}}. Without loss of generality, we may assume that
$f(z)\neq0$ and $g(z)\neq0$
since otherwise we obtain one trivial case for $f(z)=g(z)=0,$ and two other cases which are easy to
handle; one corresponds to $f(z)=0, g(z)\neq 0$ and the other is
associated with $f(z)\neq 0, g(z)=0.$ In addition, for convenience,
we may sometimes use the notation: $R_{n}=R_{n}(z)=R(n,f,g,h,k)$ to refer to the nth term of such a sequence, and we omit the argument from the functional notation when there is no ambiguity; so $f$,  for example,  will mean $f(z)$. Our
intention here is to study this sequence assuming that
$f,$ $g,$ $h$ and $k$ are any complex functions.

Certainly there is an extensive work in the literature concerning linear second order difference equations
 and their applications (see for example \cite{ca,kos,kosh}). However only a few deals with finding a general framework under which all these sequences come together under one theory (see for example \cite{yt}). Our main objective in this note is to deal with this issue subject to the extra condition that $n$ is any integer in $\mathbb{Z}$.

This paper is organized as follows. In Section 2,
we present a new closed formula solution for the
sequence defined by (1) which in turn gives new closed formulas for many  sequences
of this type such as Fibonacci, Lucas, Pell, Pell-Lucas, Jacobsthal and Jacobsthal-Lucas number sequences as well as Tchebychev,
Fibonacci and Lucas polynomials.
 Section 3 deals with showing that Binet's formula in this case is valid for negative integers as well.
Finally, new summation formulas relating the elements of the sequence $\{R_n\}_n$ are presented in Section 4. Finally,  Maple simulations that show the computational advantages of our methods are presented.

\section{Main Results}
We shall start by fixing some notation. The determinant of a square matrix $X$ will be denoted by $|X|$. For our purposes, we next introduce the following four matrices which will be used throughout this paper and they constitute the essential foundations of our main results. Define
$$A=\left[
\begin{array}
[c]{ccc}
2gh+f^{2}h-fk & \ \ \ \  2kg-fhg\\

2k-fh & \ \ \ 2gh+fk
\end{array}
\right]  , \ \ \ \  B=\left[
\begin{array}
[c]{ccc}%
0 &  g\\
1 & f
\end{array}
\right]  , $$ $C=\left[
\begin{array}
[c]{ccc}%
2g & fg\\
f & \ \  f^{2}+2g
\end{array}
\right]  $ and $D=\left[
\begin{array}
[c]{ccc}%
-f &  2g\\
2 & f
\end{array}
\right]  $ where $f$ $g,$ $h$
and $k$ are as above. Now a simple check shows that the matrices  $A$, $B$, $C$ and $D$ are pairwise commuting.

As a result,  we have the following useful lemma that relates the matrices $A$, $B$, $C$ and $D$ with only  $R_n$ and $R_{n-1}$ so that it gives the interesting consequence that solving
a second order linear homogeneous difference equation that has constant coefficients and which is coupled with initial conditions, is essentially equivalent to solving a non-homogeneous linear first order difference equation.

\begin{lemma}
Let the matrices  $A$, $B$, $C$ and $D$ be defined as above and let $R_n$ be the general term of the sequence defined by (1). Then for any integer $n,$ we have
\begin{eqnarray}
AB^{n}=CR_{n}+DgR_{n-1}.
\end{eqnarray}

\end{lemma}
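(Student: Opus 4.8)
The plan is to prove the identity by induction on $n$, handling the positive and negative directions together by exploiting that $B$ is invertible. First I would record two auxiliary matrix identities that drive the whole argument, namely $DB=C$ and $CB=fC+gD$; both follow from a direct $2\times 2$ multiplication using only the explicit entries of $B$, $C$ and $D$. Since $|B|=-g\neq 0$ (recall $g\neq 0$ was assumed), $B$ is invertible, and the defining recurrence $R_{n+1}=fR_n+gR_{n-1}$ can be solved for $R_{n-1}=g^{-1}(R_{n+1}-fR_n)$, so $R_n$ is well defined for every integer $n$; in particular $gR_{-1}=k-fh$.

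Next I would introduce the matrix-valued sequence $P_n := CR_n+DgR_{n-1}$, so that the claim becomes $P_n=AB^n$ for all $n\in\mathbb{Z}$. The key step is to establish the single matrix recurrence $P_{n+1}=P_nB$. Indeed, multiplying on the right by $B$ and invoking the two auxiliary identities gives
\begin{eqnarray*}
P_nB = R_n(CB)+gR_{n-1}(DB) = R_n(fC+gD)+gR_{n-1}C = C(fR_n+gR_{n-1})+DgR_n,
\end{eqnarray*}
and the last expression is exactly $CR_{n+1}+DgR_n=P_{n+1}$ by the recurrence. This computation is valid for every integer $n$, since the recurrence holds in both directions.

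It then remains to anchor the induction at $n=0$: one checks that $P_0=CR_0+DgR_{-1}=Ch+D(k-fh)=A$ by expanding the four entries and matching them against the given entries of $A$. Given $P_0=A$ together with $P_{n+1}=P_nB$, a routine upward induction yields $P_n=AB^n$ for $n\geq 0$, while the invertibility of $B$ lets me rewrite the recurrence as $P_n=P_{n+1}B^{-1}$ and run the induction downward to cover all negative integers as well.

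The main obstacle is not any single difficult step but rather discovering the correct auxiliary identities $DB=C$ and $CB=fC+gD$: these are precisely what make the inductive step telescope, since without them right multiplication by $B$ would not close up again on the same pair $C$, $D$. The base-case verification $P_0=A$ is the other place requiring genuine (if routine) computation, as it is where the otherwise opaque entries of $A$ are seen to be forced by the choice of $C$, $D$ and the initial data $h$, $k$.
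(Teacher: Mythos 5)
Your proof is correct and follows essentially the same route as the paper: induction on $n$ via right-multiplication by $B$, a base-case check that $Ch+D(k-fh)=A$, and downward induction using $B^{-1}$ for negative $n$. Isolating the identities $DB=C$ and $CB=fC+gD$ is a cleaner packaging of the explicit matrix computation the paper carries out in its inductive step, but the underlying argument is the same.
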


\begin{proof}
Using induction, We first do the proof for the case $n\geq1$. It is clear that (2) is true for $n=1$ since  $AB=CR_1+DgR_0$ as an inspection  shows that
$$
\left[
\begin{array}
[c]{ccc}
2gh+f^{2}h-fk & \ \ \  2kg- fhg\\
2k-fh & \ \ \ 2gh+fk
\end{array}
\right]  \left[
\begin{array}
[c]{ccc}
0 &  g\\
1 &   f
\end{array}
\right]  =
\left[
\begin{array}
[c]{ccc}
2g &  fg\\
f & \ \  f^{2}+2g
\end{array}
\right]  k+\left[
\begin{array}
[c]{ccc}
-f &  2g\\
2 &   f
\end{array}
\right]  gh .
$$
Now suppose that (2) is true for $n,$  then we have
$$
\left[
\begin{array}
[c]{ccc}
2gh+f^{2}h-fk & \ \ \  2kg- fhg\\
2k-fh & \ \ \  2gh+fk
\end{array}
\right]  \left[
\begin{array}
[c]{cc}
0 &  g\\
1 &  f
\end{array}
\right]  ^{n} =
\left[
\begin{array}
[c]{cc}%
2g &  fg\\
f & \ \ \ f^{2}+2g
\end{array}
\right]  R_{n}+\left[
\begin{array}
[c]{cc}
-f &  2g\\
2 &  f
\end{array}
\right]  gR_{n-1}.
$$
Multiplying both sides of this last equation to the right by $B=\left[
\begin{array}
[c]{cc}
0 &  g\\
1 &   f
\end{array}
\right]  ,$ we obtain the following equality:
\begin{align*} AB^{n+1}&=\left(  CR_{n}+DgR_{n-1}\right)  B\\
&=\left[
\begin{array}
[c]{cc}
fgR_{n}+2g^{2}R_{n-1} & \ \ g(2g+f^{2})R_{n}+fg^{2}R_{n-1}\\
  &   \\
(f^{2}+2g)R_{n}+fgR_{n-1} & \ \ \ \ f(f^{2}+3g)R_{n}+g(2g+f^{2})R_{n-1}
\end{array}
\right]  \\
&=\left[
\begin{array}
[c]{cc}
2g &  fg\\
  &   \\
f & \ \ f^{2}+2g
\end{array}
\right]  (fR_{n}+gR_{n-1})+\left[
\begin{array}
[c]{cc}
-f & 2g\\
  &   \\
2 &   f
\end{array}
\right]  gR_{n}\\ &=CR_{n+1}+DgR_{n}.
\end{align*}
On the other hand, for the case $n=0,$ it can be  easily seen that the  sequence defined by (1) becomes $R_{-1}=g^{-1}(k-fh).$
Therefore, we have $A=Ch+D(k-fh)$ so that (2) is also valid for $n=0.$  Moreover, as
$B^{-1}=\left[ \begin {array}{cc} -{\frac {f}{g}} & 1\\ \noalign{\medskip}\frac{1}{g} & 0\end {array} \right],$ then a simple check shows that $AB^{-1}=CR_{-1}+DgR_{-2}$ so that (2) is also true for $n=-1$ as well.
Finally, the
proof for the case $n<-1,$ can also be done by induction in a similar
manner as in the first case, and the proof is complete.
\end{proof}

Next note that an inspection  shows that the following determinant formulas are  valid:
$$ |A|=\left(  f^{2}+4g\right)  \left(
gh^{2}-k^{2}+fhk\right)  ,$$ and $$\left|CR_{n}+DgR_{n-1}\right |=g\left( f^{2}+ 4g\right)  \left(  R_{n}^{2}-gR_{n-1}
^{2}-fR_{n}R_{n-1}\right)  .$$ As a conclusion, we have
the following lemma which also appears as Theorem 5 in \cite{yt} for $z$ real, albeit arrived at by different means.

\begin{lemma}
Let $R_n,$ $f$, $g$, $h$ and $k$ be as given above.  If $f^{2}+4g\neq0$ then we have:
\begin{equation}
R_{n}^{2}-gR_{n-1}^{2}-fR_{n}R_{n-1}=\left(
-gh^{2}+k^{2}-fhk\right)  (-g)^{n-1},
\end{equation}
for all integers $n.$
\end{lemma}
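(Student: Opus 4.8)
The plan is to obtain (3) by taking determinants on both sides of the matrix identity (2) of Lemma 2.1. The crucial advantage of this route is that Lemma 2.1 has already been established for every integer $n$, so a single determinant computation will deliver the result uniformly for all $n \in \mathbb{Z}$, with no need to revisit the negative-index cases separately.

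First I would record the determinant of $B$: a direct computation gives $|B| = -g$, which is nonzero by the standing hypothesis $g \neq 0$ (this also re-confirms that $B^{n}$ makes sense for negative $n$). Applying the multiplicativity of the determinant to (2) then yields $|A|\,|B|^{n} = |CR_n + DgR_{n-1}|$, that is, $|A|\,(-g)^{n} = |CR_n + DgR_{n-1}|$. At this point I would simply substitute the two determinant formulas recorded immediately before the statement, namely $|A| = (f^{2}+4g)(gh^{2} - k^{2} + fhk)$ and $|CR_n + DgR_{n-1}| = g(f^{2}+4g)(R_n^{2} - gR_{n-1}^{2} - fR_nR_{n-1})$. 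Since $f^{2}+4g \neq 0$ by hypothesis, this factor may be cancelled from both sides, leaving $(gh^{2} - k^{2} + fhk)(-g)^{n} = g(R_n^{2} - gR_{n-1}^{2} - fR_nR_{n-1})$.

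The remaining work is purely algebraic, and it is the only place where care is genuinely required: one must isolate the quadratic expression and reconcile the powers of $-g$ with the sign of the constant. Dividing by $g$ and writing $(-g)^{n} = (-1)^{n} g^{n}$ gives $R_n^{2} - gR_{n-1}^{2} - fR_nR_{n-1} = (gh^{2} - k^{2} + fhk)(-1)^{n} g^{n-1}$, while the target right-hand side satisfies $(-gh^{2} + k^{2} - fhk)(-g)^{n-1} = -(gh^{2} - k^{2} + fhk)(-1)^{n-1}g^{n-1} = (gh^{2} - k^{2} + fhk)(-1)^{n} g^{n-1}$, so the two sides agree. I do not anticipate any conceptual obstacle here, since both determinant evaluations are already in hand; the one pitfall is the sign bookkeeping in the exponents of $-g$, which is exactly what the elementary identity $-(-1)^{n-1} = (-1)^{n}$ resolves.
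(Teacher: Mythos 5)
Your proposal is correct and follows exactly the paper's own route: take determinants in (2), use multiplicativity to get $|A|(-g)^n=|CR_n+DgR_{n-1}|$, substitute the two displayed determinant formulas, and cancel the nonzero factor $f^2+4g$. The only difference is that you spell out the sign bookkeeping with the powers of $-g$, which the paper leaves implicit.
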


\begin{proof}
Taking determinants of both sides of (2)  we obtain $|AB^{n}|=|CR_{n}+DgR_{n-1}|.$  More explicitly, 
$$
\left|  \left[
\begin{array}
[c]{cc}
2gh+f^{2}h-fk &\ \ \  2gk- fgh\\
2k-fh & \ 2gh+fk
\end{array}
\right]  \left[
\begin{array}
[c]{cc}
0 &  g\\
1 & f
\end{array}
\right]  ^{n}\right|  \ =
\left|  \left[
\begin{array}
[c]{cc}
2g &   fg\\
f &  \ \ f^{2}+2g
\end{array}
\right]  R_{n}+\left[
\begin{array}
[c]{cc}
-f &  2g\\
2 &   f
\end{array}
\right]  gR_{n-1}\right|  .
$$
Using the fact that $|XY|=|X||Y|$ for any square matrices of the same size, we get
$$\left(  f^{2}+4g\right)  \left(  gh^{2}-k^{2}+fhk\right)  (-g)^{n}=\\
g\left(
f^{2}+4g\right)  \left(  R_{n}^{2}
-gR_{n-1}^{2}-fR_{n}R_{n-1}\right)  .
$$
Therefore (3) is valid.
\end{proof}

As a consequence, we are now able to present the following result.

\begin{theorem}
For any integer $n$ and for any complex functions $f\neq 0$ and $g\neq 0$, the general term $R_n$ of the recurrence  $R_{n+1}=fR_{n}+gR_{n-1}
,$ with seeds $R_0=h,R_1=k,$
satisfies one of the following:
\newline 1) If $f^{2}+4g\neq0$ and
$gh^{2}-k^{2}+fhk\neq0,$ then
\begin{equation} R_n^{2}= \frac{(-g)^{n}(gh^{2}-k^{2}+fhk)}{\left(f^{2}+4g\right)} \left| M+(-g)^nB^{-2n}\right | \end{equation}
where
 $M= \left[
\begin{array}
[c]{cc}
\frac{gh^{2}+f^{2}h^{2}+k^{2}-2fhk}{gh^{2}-k^{2}+fhk} & \ \ \ \  gh\frac{2k-fh}{gh^{2}-k^{2}+fhk}\\
  &   \\
h\frac{2k-fh}{gh^{2}-k^{2}+fhk} & \  \frac{gh^{2}+k^{2}}{gh^{2}-k^{2}+fhk}
\end{array}
\right], $ and $B=\left[
\begin{array}
[c]{ccc}
0 &  g\\
1 & f
\end{array}
\right]. $

2) If $f^{2}+4g=0$ then we obtain $R_{n+1}\ =fR_{n}-\dfrac{f^{2}}
{4}R_{n-1}$ which implies that
\begin{eqnarray}
R_{n}=\frac{n(2k-fh  )+fh}{2^{n}}  f^{n-1}.
\end{eqnarray}
If in addition, $gh^{2}-k^{2}+fhk=0,$ then $R_{n}=h\left(  \frac{f}{2}\right)  ^{n}.$
\newline 3) If $f^{2}+4g\neq0$ and
$gh^{2}-k^{2}+fhk=0,$  then we obtain the following closed formula: $R_{n}=\dfrac{k^{n}}{h^{n-1}}.$
\end{theorem}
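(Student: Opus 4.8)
The plan is to dispatch the three cases separately; case (1) carries essentially all the content, while cases (2) and (3) are direct verifications. Throughout I abbreviate $\delta=f^{2}+4g$ and $\Delta=gh^{2}-k^{2}+fhk$, so the first determinant formula preceding Lemma 2.2 reads $|A|=\delta\Delta$, and one checks directly that $|D|=-\delta$ and $|B|=-g$. For case (3), where $\Delta=0$, I would simply substitute $R_{n}=k^{n}/h^{n-1}$ into (1): the seeds give $R_{0}=h$ and $R_{1}=k$, while the recurrence, after clearing denominators, reduces exactly to $k^{2}=fhk+gh^{2}$, i.e. to the hypothesis $\Delta=0$. Since $\Delta=0$ together with $g\neq0$ forces $h,k\neq0$, the formula is defined and satisfies (1) identically for every $n\in\mathbb{Z}$. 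Case (2), where $\delta=0$, is the repeated-root situation $g=-f^{2}/4$; here I would again verify the stated closed form by substitution, confirming the two seeds and then the recurrence $R_{n+1}=fR_{n}-\frac{f^{2}}{4}R_{n-1}$ by a one-line cancellation, the degenerate subcase $\Delta=0$ (equivalently $2k=fh$) collapsing the formula to $h(f/2)^{n}$.

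The substance is case (1), and the engine is Lemma 2.1 put into a sharper form. The two preliminary identities I would establish are $C=DB$ (a direct check, equivalently $C^{-1}D=B^{-1}$) and, using $A=Ch+D(k-fh)$ from the proof of Lemma 2.1, $D^{-1}A=hB+(k-fh)I$. With the first, Lemma 2.1 becomes $AB^{n}=D(R_{n}B+gR_{n-1}I)$, so that $N_{n}:=D^{-1}AB^{n}=(D^{-1}A)B^{n}=R_{n}B+gR_{n-1}I$; in particular $\mathrm{tr}(N_{n})=fR_{n}+2gR_{n-1}$ and $|N_{n}|=-g(R_{n}^{2}-gR_{n-1}^{2}-fR_{n}R_{n-1})=-\Delta(-g)^{n}$ by Lemma 2.2. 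A short computation then identifies the matrix in the statement as $M=(D^{-1}A)^{2}/\Delta=N_{0}^{2}/\Delta$, whence $|M|=(|A|/|D|)^{2}/\Delta^{2}=(-\Delta)^{2}/\Delta^{2}=1$.

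From here it is pure $2\times2$ bookkeeping. I would expand $|M+(-g)^{n}B^{-2n}|=|M|+|(-g)^{n}B^{-2n}|+\mathrm{tr}(\mathrm{adj}(M)\,(-g)^{n}B^{-2n})$; the first two terms equal $1$ since $|B|=-g$. For the third I use $\mathrm{adj}(M)=M^{-1}=\Delta N_{0}^{-2}$ (legitimate as $|M|=1$), together with $N_{0}^{-2}B^{-2n}=N_{n}^{-2}$ and the scalar identity $\mathrm{tr}(N_{n}^{-2})=(\mathrm{tr}(N_{n})^{2}-2|N_{n}|)/|N_{n}|^{2}$, which collapses the determinant to $4+\mathrm{tr}(N_{n})^{2}/(\Delta(-g)^{n})$. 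The closing move is to invoke Lemma 2.2 once more, now as $\delta R_{n}^{2}-\mathrm{tr}(N_{n})^{2}=4g(R_{n}^{2}-fR_{n}R_{n-1}-gR_{n-1}^{2})=4\Delta(-g)^{n}$; this turns the determinant into $\delta R_{n}^{2}/(\Delta(-g)^{n})$, and multiplying by $(-g)^{n}\Delta/\delta$ yields $R_{n}^{2}$ for every integer $n$ (all inverses exist because $g\neq0$ and, since $\Delta\neq0$, $N_{0}$ is invertible).

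The main obstacle is not any single computation but spotting the right normal form: recognizing that $M$ is the square $N_{0}^{2}/\Delta$ and that, once it is paired with $(-g)^{n}B^{-2n}$, the determinant is secretly a perfect square. The cleanest way to see why this must be true is to diagonalize $B$, whose eigenvalues $\alpha,\beta$ are the distinct roots of $t^{2}-ft-g$ (distinct because $\delta\neq0$); writing $R_{n}=P\alpha^{n}+Q\beta^{n}$ with $PQ=\Delta/\delta$, one finds $M=\mathrm{diag}(P/Q,Q/P)$ and $(-g)^{n}B^{-2n}=\mathrm{diag}((\beta/\alpha)^{n},(\alpha/\beta)^{n})$, so the determinant is literally $(P\alpha^{n}+Q\beta^{n})^{2}/(PQ(\alpha\beta)^{n})$. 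I would use this eigenvalue picture only as the guiding heuristic and present the trace-based argument above, since it stays entirely within the matrix-and-determinant toolkit already developed and calls on Lemma 2.2 directly.
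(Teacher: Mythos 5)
Your proof is correct, and for the main case it takes a genuinely different route from the paper's. The paper also starts from Lemma 2.1 and the factor $|B^{-n}|=(-g)^{-n}$, but then it solves (2) for $B^{n}=A^{-1}(CR_{n}+DgR_{n-1})$ and for $(-g)^{n}B^{-n}=(-g)^{n}(CR_{n}+DgR_{n-1})^{-1}A$, writes both matrices out entry by entry (invoking Lemma 2.2 to clean up the denominators of the second), adds them, and observes that the sum collapses to $R_{n}$ times the matrix $A$ divided by $gh^{2}-k^{2}+fhk$, so the determinant becomes $R_{n}^{2}|A|$ and the prefactor cancels against $|A|=(f^{2}+4g)(gh^{2}-k^{2}+fhk)$. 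You instead exploit the factorization $C=DB$ to put Lemma 2.1 in the normal form $N_{n}:=D^{-1}AB^{n}=R_{n}B+gR_{n-1}I$, recognize $M=N_{0}^{2}/\Delta$, and run the whole computation through the coordinate-free $2\times 2$ identities $|X+Y|=|X|+|Y|+\mathrm{tr}(\mathrm{adj}(X)Y)$ and $\mathrm{tr}(X^{-2})=(\mathrm{tr}(X)^{2}-2|X|)/|X|^{2}$, calling Lemma 2.2 twice (once for $|N_{n}|=-\Delta(-g)^{n}$, once to convert $\mathrm{tr}(N_{n})^{2}=(fR_{n}+2gR_{n-1})^{2}$ into $\delta R_{n}^{2}-4\Delta(-g)^{n}$). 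I checked the pivotal identities ($C=DB$, $N_{0}^{2}=\Delta M$, $|N_{n}|=-\Delta(-g)^{n}$, and the closing trace identity) and they all hold, so your argument closes for every integer $n$. What your version buys is the elimination of all entrywise matrix algebra and a conceptual explanation of where $M$ comes from (the square of $D^{-1}A$ normalized by $\Delta$, i.e. $\mathrm{diag}(P/Q,Q/P)$ in the eigenbasis of $B$), at the cost of importing the two trace/adjugate identities; the paper's version is longer and more opaque but uses nothing beyond direct multiplication. Cases 2 and 3 you handle essentially as the paper does (direct substitution, resp. the first-order relation $R_{n}=(k/h)R_{n-1}$ forced by Lemma 2.2); the only blemish is your claim in Case 3 that $\Delta=0$ and $g\neq 0$ force $h,k\neq 0$ --- the pair $h=k=0$ also gives $\Delta=0$ and makes $k^{n}/h^{n-1}$ ill-defined --- but that degenerate identically-zero sequence is implicitly excluded by the paper as well.
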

\begin{proof}
\underline{Case 1}: First notice that $-gB^{-2}=\left[
\begin{array}
[c]{cc}
-\frac{1}{g}\left(  g+f^{2}\right)  &  f\\
\dfrac{f}{g} &  -1
\end{array}
\right]  ,$ and  $MB=BM$. Therefore,  the right-hand side of (4) can be  rewritten as
\begin{align*} \text{RHS} &= \frac{(-g)^{n}(gh^{2}-k^{2}+fhk)}{\left(f^{2}+4g\right)} \left| B^{-n}\right | \left |MB^n+(-g)^nB^{-n}\right |\\
  &= \frac{gh^{2}-k^{2}+fhk}{\left(f^{2}+4g\right)} \left| B^{-n}\right | \left |MB^n+(-g)^nB^{-n}\right |.
  \end{align*}
 Now from equality (2), we know that $B^{n}=A^{-1}\left(  CR_{n}+DgR_{n-1}\right)  .$ Finding $A^{-1}$ and multiplying, we obtain
 $$B^{n}=
\left[
\begin{array}
[c]{cc}
g\frac{hR_{n}-kR_{n-1}}{gh^{2}-k^{2}+fhk} &   g\frac{-kR_{n}+fhR_{n}+ghR_{n-1}
}{gh^{2}-k^{2}+fhk}\\
  &   \\
\frac{-kR_{n}+fhR_{n}+ghR_{n-1}}{gh^{2}-k^{2}+fhk} & \ \ \ \  \frac{ghR_{n}
-fkR_{n}+f^{2}hR_{n}-gkR_{n-1}+fghR_{n-1}}{gh^{2}-k^{2}+fhk}
\end{array}
\right]  .$$

As a result, we get
\begin{align*}
 M B^{n} & =\left[
\begin{array}
[c]{cc}
g\frac{hR_{n}+kR_{n-1}-fhR_{n-1}}{gh^{2}-k^{2}+fhk} & g\frac{kR_{n}+ghR_{n-1}
}{gh^{2}-k^{2}+fhk}\\
  &   \\
\frac{kR_{n}+ghR_{n-1}}{gh^{2}-k^{2}+fhk} &  \frac{ghR_{n}+fkR_{n}+gkR_{n-1}
}{gh^{2}-k^{2}+fhk}
\end{array}\right] \\
&=
\frac{1}{gh^{2}-k^{2}+fhk}\left[
\begin{array}
[c]{cc}
g\left(  hR_{n}+kR_{n-1}-fhR_{n-1}\right)  & g\left(  kR_{n}+ghR_{n-1}\right)\\
  &   \\
kR_{n}+ghR_{n-1} & ghR_{n}+fkR_{n}+gkR_{n-1}
\end{array}
\right]  .
\end{align*}
Similarly, we can write
\begin{align*} (-g)^{n}B^{-n} &= (-g)^{n}\left(  CR_{n}+DgR_{n-1}\right)^{-1} A\\
&=\left(  -g\right)  ^{n}\left[
\begin{array}
[c]{cc}
\frac{ghR_{n}-ftR_{n}+f^{2}hR_{n}-gtR_{n-1}+fghR_{n-1}
}{g\left(  -gR_{n-1}^{2}+R_{n}^{2}-fR_{n}R_{n-1}\right)  } & \frac{-kR_{n}+fhR_{n}+ghR_{n-1}}{gR_{n-1}^{2}-R_{n}^{2}+fR_{n}R_{n-1}}\\
  &   \\
\frac{-tR_{n}+fhR_{n}+ghR_{n-1}}{g\left(  gR_{n-1}
^{2}-R_{n}^{2}+fR_{n}R_{n-1}\right)  } \ \  \ \ \ \ \ \ & \frac
{hR_{n}-kR_{n-1}}{-gR_{n-1}^{2}+R_{n}^{2}-fR_{n}R_{n-1}}
\end{array}
\right] \\
 &= \tfrac
{g}{\left(  gh^{2}-k^{2}+fhk\right)  }\left[
\begin{array}
[c]{cc}
\frac{(gh-fk+f^{2}h)R_{n}-(gk-fgh)R_{n-1}}{g} &   (k-fh)R_{n}-ghR_{n-1}\\
  &   \\
\frac{(k-fh)R_{n}-ghR_{n-1}}{g} \ \ \ \ \  & hR_{n}-kR_{n-1}
\end{array}
\right]  . \end{align*} Substituting these two expressions of $B^{n}$ and $(-g)^{n}B^{-n}$,
 we obtain an expression that involves a determinant of sum of two matrices. More explicitly,
\begin{multline*}
\text{RHS}=\tfrac{(gh^{2}-k^{2}+fhk)}{\left( f^{2}+4g\right)  } \left|  \left[
\begin{array}
[c]{cc}
g\left(  hR_{n}+kR_{n-1}-fhR_{n-1}\right)  & g\left(  kR_{n}+ghR_{n-1}\right)\\
  &   \\
kR_{n}+ghR_{n-1} & ghR_{n}+fkR_{n}+gkR_{n-1}%
\end{array}
\right]  \right. \\
\left.  + g\left[
\begin{array}[c]{cc}
\frac{ghR_{n}-fkR_{n}+f^{2}hR_{n}-gkR_{n-1}+fghR_{n-1}}{g} &  \ \ \ \ \ \  \tiny{kR_{n}-fhR_{n}-ghR_{n-1}}\\
  &   \\
\frac{kR_{n}-fhR_{n}-ghR_{n-1}}{g} \ \ \ \ \  \ &  \tiny{hR_{n}-kR_{n-1}}
\end{array}
\right]  \right|  .
\end{multline*}
After arranging terms and simplifying by $gh^{2}-k^{2}+fhk$, we obtain
\begin{align*} \text{RHS}&=\frac{1}{\left( f^{2}+4g\right)(gh^{2}-k^{2}+fhk)}\left|
\left[
\begin{array}
[c]{cc}
R_{n}\left(  f^{2}h+2gh-fk\right)  & \ \  gR_{n}\left(  2k-fh\right) \\
  &   \\
R_{n}\left(  2k-fh\right)  & \ \ \  R_{n}\left(  2gh+fk\right)
\end{array}
\right]  \right|\\  &=R_{n}^{2}.\end{align*} This completes the proof of the first
case.
\newline \underline{Case 2}:   (5) can be easily obtained by a direct substitution.
Now if $gh^{2}-k^{2}+fhk=0,$ then solving for $k$ we obtain
$k=\frac{fh}{2} $. Substituting in (5), then the proof can be easily completed.
\newline\underline{ Case 3}: Observe first that $gh^{2}-k^{2}+fhk=0,$ implies that $k=h\dfrac{f\pm\sqrt{f^{2}+4g}}{2}$.  Moreover, (3) implies that $$R_{n}^{2}-gR_{n-1}
^{2}-fR_{n}R_{n-1}=\left(  -gh^{2}+k^{2}-fhk\right)  (-g)^{n-1}=0.$$ As a result, we obtain
$R_{n}=\dfrac{f\pm\sqrt{f^{2}+4g}}{2}R_{n-1}$, and hence $R_{n}=\dfrac{k}{h}R_{n-1}$. Thus, $R_{n}=\dfrac{k^{n}}{h^{n-1}}.$
\end{proof}

An immediate consequence of the preceding theorem is the following corollary
which gives new closed formulas for the following sequences; see for example \cite{ho,kos,kosh,na} and the references therein for more details on such sequences.

\begin{corollary}
The following new closed formulas hold.
\newline(1) If $f=x, g=1, h=0$ and $k=1,$ we get the
Fibonacci polynomials:  $$ f_{n}(x)=\sqrt{\frac{(-1)^{n+1}%
}{4+x^{2}} \left|  \left[
\begin{array}
[c]{cc}%
-1 &   0\\
0 &  -1
\end{array}
\right]  +\left[
\begin{array}
[c]{cc}%
-(1+x^{2}) &  x\\
x &  -1
\end{array}
\right]  ^{n}\right|  }.$$
\newline(2) For $f=1,$
$g=1,$ $h=0$ and $k=1,$ then the Fibonacci numbers are given by: $$ F_{n}=\sqrt{\frac{(-1)^{n+1}}%
{5} \left|  \left[
\begin{array}
[c]{cc}
-1 &  0\\
0 &  -1
\end{array}
\right]  +\left[
\begin{array}
[c]{cc}
-2 &  1\\
1 &   -1
\end{array}
\right]  ^{n}\right|  }.$$
\newline(3) If $f=x, g=1, h=2,$ and
$k=x$  we get the Lucas polynomials:
$$ l_{n}(x)=\sqrt{(-1)^{n}\left|  \left[
\begin{array}
[c]{cc}
1 &  0\\
0 &   1
\end{array}
\right]  +\left[
\begin{array}
[c]{cc}
-(1+x^{2}) &   x\\
x & -1
\end{array}
\right]  ^{n}\right|  }.$$
\newline(4) For $f=1,$ $g=1,$ $h=2$ and $k=1,$ the Lucas
numbers are given by: $$L_{n}=\sqrt{(-1)^{n}\left|  \left[
\begin{array}
[c]{cc}
1 &  0\\
0 &  1
\end{array}
\right]  +\left[
\begin{array}
[c]{cc}
-2 &  1\\
1 &  -1
\end{array}
\right]  ^{n}\right|  } . $$
\newline(5) If $f=1, g=2x, h=1$ and $k=1,$ we get the
Jacobsthal polynomials: $$J_{n}(x)=\sqrt{\frac{(-1)^{n}(2x)^{n+1}}{1+8x}%
\left|  \left[
\begin{array}
[c]{cc}
1 &  1\\
\frac{1}{2x} &  1+\frac{1}{2x}
\end{array}
\right]  +\left[
\begin{array}
[c]{cc}
-\frac{1}{2x}-1 &  1\\
\frac{1}{2x} &  -1
\end{array}
\right]  ^{n}\right|  }.$$
\newline(6) If $f=1, g=2, h=0$ and $k=1,$ we get the
Jacobsthal numbers: $$J_{n}=\sqrt{\frac{(-2)^{n}}{-9}%
\left|  \left[
\begin{array}
[c]{cc}
-1 &  0\\
0 &  -1
\end{array}
\right]  +\left[
\begin{array}
[c]{cc}
-3/2 &  1\\
1/2 &  -1
\end{array}
\right]  ^{n}\right|  }.$$
\newline(7) For $f=1, g=2x, h=2$ and $k=1,$ we obtain  the
Jacobsthal-Lucas polynomials: $$ j_{n}(x)=\sqrt{(-2x)^{n}
\left|  \left[
\begin{array}
[c]{cc}
1 &  0\\
0 &  1
\end{array}
\right]  +\left[
\begin{array}
[c]{cc}
-\frac{1}{2x}-1 &  1\\
\frac{1}{2x} &  -1
\end{array}
\right]  ^{n}\right|  }.$$
\newline(8) For $f=1, g=2, h=2$ and $k=1,$ we obtain  the
Jacobsthal-Lucas numbers: $$ j_{n}=\sqrt{(-2)^{n}
\left|  \left[
\begin{array}
[c]{cc}
1 &  0\\
0 &  1
\end{array}
\right]  +\left[
\begin{array}
[c]{cc}
-3/2 &  1\\
1/2 &  -1
\end{array}
\right]  ^{n}\right|  }.$$
\newline(9) If $f=2,g=1,h=2$ and $k=2,$ we get the Pell-Lucas numbers:
 $$ Q_{n}=\sqrt{(-1)^{n}\left|
\left[
\begin{array}
[c]{cc}
1&  0\\
0 &   1
\end{array}
\right]  +\left[
\begin{array}
[c]{cc}
-5 &  2\\
2 &  -1
\end{array}
\right]  ^{n}\right|  }.$$
\newline(10)  For $f=2,$ $g=1,$ $h=0$ and $k=1,$ we obtain the
Pell numbers: $$ P_n=\sqrt{\frac{(-1)^{n+1}}%
{8}\left|  \left[
\begin{array}
[c]{cc}
-1 &  0\\
0 &  -1
\end{array}
\right]  +\left[
\begin{array}
[c]{cc}
-5 &  2\\
2 & -1
\end{array}
\right]  ^{n}\right|  }.$$
\newline(11) For $f=2x, g=-1, h=1$ and $k=x,$ we obtain the
Tchebychev polynomial of the first kind: $$ T_{n}%
(x)=\sqrt{\frac{1}{4}\left|  \left[
\begin{array}
[c]{cc}
1 &  0\\
0 &  1
\end{array}
\right]  +\left[
\begin{array}
[c]{cc}
4x^{2}-1 &  2x\\
-2x &  -1
\end{array}
\right]  ^{n}\right |  }.$$

\end{corollary}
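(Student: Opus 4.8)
The plan is to observe that each of the eleven identities is nothing but the master formula (4) of Theorem 2.3 evaluated at the indicated quadruple $(f,g,h,k)$, so the whole corollary reduces to substitution together with routine simplification; I would organize the argument so that the few non-mechanical points stand out rather than drown in algebra.

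First I would put the two ingredients of (4) into a ready-to-substitute shape. From the opening line of the proof of Case 1 we already have
$$-gB^{-2} = \begin{bmatrix} -\tfrac{1}{g}\left(g+f^{2}\right) & f \\ \tfrac{f}{g} & -1 \end{bmatrix},$$
and since $-g$ is a scalar we may replace $(-g)^{n}B^{-2n}$ in (4) by $\left(-gB^{-2}\right)^{n}$; this is exactly the matrix raised to the power $n$ that sits inside every determinant of the corollary. Substituting each $(f,g,h,k)$ into the entries above then reproduces the second matrix in each item: for example $g=1$ gives $\begin{bmatrix}-(1+f^{2}) & f\\ f & -1\end{bmatrix}$, which specializes to the Fibonacci and Lucas matrices, while $g=-1$, $f=2x$ produces the Tchebychev matrix.

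Next I would substitute the same data into the definition of $M$ and into the scalar prefactor $\dfrac{(-g)^{n}\left(gh^{2}-k^{2}+fhk\right)}{f^{2}+4g}$. The computation is made short by two recurring collapses of the quantity $gh^{2}-k^{2}+fhk$: in the cases with $h=0$ and $k=1$ (Fibonacci, Pell, Jacobsthal) it equals $-1$, forcing $M=-I$ and the prefactor $\tfrac{-(-g)^{n}}{f^{2}+4g}$; and in the ``Lucas'' cases, where $h=2$ and $k=f$, it collapses to $f^{2}+4g$ itself, forcing $M=I$ and the prefactor $(-g)^{n}$. The one genuinely non-diagonal instance is the Jacobsthal polynomials ($h=k=1$), handled by the same direct evaluation. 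I expect this bookkeeping to be the bulk of the write-up but not its difficulty.

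The two points that do require care, and which I would flag explicitly rather than bury, are the following. First, Theorem 2.3 outputs $R_{n}^{2}$, so writing $R_{n}=\sqrt{\,\cdot\,}$ presupposes the principal root; this is the only genuine subtlety I anticipate, since it is literally valid only where $R_{n}\ge 0$. That is harmless for the integer sequences and for the positive-argument polynomial families, but for a signed family such as the Tchebychev polynomials it forces a restriction to the range on which $T_{n}(x)\ge 0$ (or an absolute-value reading of the radical). Second, Case 1 of Theorem 2.3 is in force only when $f^{2}+4g\neq 0$ and $gh^{2}-k^{2}+fhk\neq 0$; these hold outright for the integer sequences but impose exclusions on $x$ for the polynomial families. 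The sharpest instance is again Tchebychev, where $g=-1$ gives $f^{2}+4g=4(x^{2}-1)$ and $gh^{2}-k^{2}+fhk=x^{2}-1$, so the identity is asserted for $x\neq\pm 1$, the two vanishing factors cancelling to leave the clean prefactor $\tfrac14$. I would record these domain conditions as part of each statement, since they are the only content beyond plain substitution.
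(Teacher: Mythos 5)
Your proposal is correct and follows exactly the route the paper intends: the paper offers no written proof, declaring the corollary ``an immediate consequence of the preceding theorem,'' and your substitution of each quadruple $(f,g,h,k)$ into formula (4), using $(-g)^{n}B^{-2n}=\left(-gB^{-2}\right)^{n}$ and the collapses of $gh^{2}-k^{2}+fhk$, is precisely that verification. Your explicit flagging of the principal-root ambiguity and the domain exclusions (e.g.\ $x\neq\pm1$ for Tchebychev) is a worthwhile addition the paper omits.
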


\section{Generalized Binet formula}

The main objective in this section is to show that Binet's formula for the difference equation (1) is also valid for negative integers as well. But first, we need to introduce some notation. Let $P$ be  the matrix which is defined by
$$P:=\left[ \begin {array}{cc} 1&1\\ \noalign{\medskip}{\frac {f+
\,\sqrt {{f}^{2}+4\,g}}{2g}}&{\frac {f-\sqrt {{f}^{2}+4\,g}}{
2g}}\end {array} \right].$$ Now it is easy to check that  the inverse $P^{-1}$ of $P$ is given by
$$ P^{-1}= \left[ \begin {array}{cc} {\frac {-f+\sqrt {{f}^{2}+4\,g}}{2\sqrt {{f}^{2}+4\,g}}}&{\frac {g}{\sqrt {{f}^{2}+4\,g}}}
\\ \noalign{\medskip}{\frac {f+\sqrt {{f}^{2}+4\,g}}{2\sqrt {{f}^{
2}+4\,g}}}&-{\frac {g}{\sqrt {{f}^{2}+4\,g}}}\end {array} \right].
$$  In addition,  $P$ diagonalizes all the matrices $A$, $B$, $C$ and $D$. More explicitly, an inspection shows that
\begin{itemize}
\item[(i)] $P^{-1}BP=\left[ \begin {array}{cc} \frac{1}{2}\left(f+\sqrt {{f}^{2}+4\,g}\right)&0\\ \noalign{\medskip}0
&\frac{1}{2}\left(f-\sqrt {{f}^{2}+4\,g}\right)\end {array} \right],$

\item[(ii)] $P^{-1}DP=\left[ \begin {array}{cc} \sqrt {{f}^{2}+4\,g}&0\\ \noalign{\medskip}0
&-\sqrt {{f}^{2}+4\,g}\end {array} \right]$,
\item[(iii)] $P^{-1}CP=\left[ \begin {array}{cc} \frac{1}{2}\left(f^2+4g+\sqrt {{f}^{2}+4\,g}\right)&0\\ \noalign{\medskip}0
&\frac{1}{2}\left(f^2+4g-\sqrt {{f}^{2}+4\,g}\right)\end {array} \right],$
\item[(iv)] $P^{-1}AP=\left[ \begin {array}{cc} \frac{1}{2}\left(h(f^2+4g)+(2k-fh)\sqrt {{f}^{2}+4\,g}\right)&0\\ \noalign{\medskip}0
&\frac{1}{2}\left(h(f^2+4g)-(2k-fh)\sqrt {{f}^{2}+4\,g}\right)\end {array} \right].$
\end{itemize}

As a result, we have the following conclusion which ensures the validity  of Binet's formula  for negative integers as well.

\begin{corollary} For any integer $n$, it holds that
$$ R_n=\left(\frac{h}{2}+\frac{2k-fh}{2\sqrt{f^2+4\,g}}\right)\left(\frac{f+\sqrt{f^2+4\,g}}{2}\right)^n +
\left(\frac{h}{2}-\frac{2k-fh}{2\sqrt{f^2+4\,g}}\right)\left(\frac{f-\sqrt{f^2+4\,g}}{2}\right)^n.$$
\end{corollary}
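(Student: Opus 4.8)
The plan is to diagonalize the matrix $B$ and read off a closed form for $R_n$ from the key identity (2) of Lemma 2.1, namely $AB^n = CR_n + DgR_{n-1}$. Since the eigendecompositions of $A$, $B$, $C$, and $D$ are all given explicitly in items (i)--(iv) above with the \emph{same} diagonalizing matrix $P$, the strategy is to conjugate equation (2) by $P$ and compare entries on the diagonal. This turns a matrix identity into two scalar equations, one for each eigenvalue, and solving the resulting $2\times 2$ linear system for the quantities $R_n$ and $gR_{n-1}$ will isolate $R_n$ directly.

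**First I would** set $\lambda_{\pm} = \tfrac{1}{2}(f \pm \sqrt{f^2+4g})$, so that (i) says $P^{-1}BP = \mathrm{diag}(\lambda_+,\lambda_-)$ and hence $P^{-1}B^nP = \mathrm{diag}(\lambda_+^n,\lambda_-^n)$. Conjugating (2) by $P$ gives
\begin{equation*}
(P^{-1}AP)(P^{-1}B^nP) = (P^{-1}CP)R_n + (P^{-1}DP)\,gR_{n-1}.
\end{equation*}
All four conjugated matrices are diagonal, so this is a pair of scalar identities. **Next I would** extract the top-left entry: using (iv) for $P^{-1}AP$, (iii) for $P^{-1}CP$, and (ii) for $P^{-1}DP$, the $(1,1)$ equation reads
\begin{equation*}
\tfrac{1}{2}\bigl(h(f^2+4g)+(2k-fh)\sqrt{f^2+4g}\bigr)\lambda_+^n
= \tfrac{1}{2}\bigl(f^2+4g+\sqrt{f^2+4g}\bigr)R_n + \sqrt{f^2+4g}\,\,gR_{n-1}.
\end{equation*}
The analogous $(2,2)$ entry gives a second equation with $\lambda_-^n$ and the sign of $\sqrt{f^2+4g}$ flipped throughout.

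**The main obstacle** — really the only nontrivial bookkeeping — is disentangling $R_n$ from the $gR_{n-1}$ term that appears in both scalar equations. I would treat the two entry-equations as a linear system in the unknowns $R_n$ and $gR_{n-1}$ and solve for $R_n$ by elimination; the coefficient matrix is (up to a scalar) essentially $P^{-1}CP$ versus $P^{-1}DP$, whose determinant is proportional to $f^2+4g \neq 0$, so the system is nonsingular and $R_n$ is uniquely determined. After clearing the common factor $\sqrt{f^2+4g}$ and simplifying, the coefficient of $\lambda_+^n$ collapses to $\tfrac{h}{2} + \tfrac{2k-fh}{2\sqrt{f^2+4g}}$ and that of $\lambda_-^n$ to $\tfrac{h}{2} - \tfrac{2k-fh}{2\sqrt{f^2+4g}}$, yielding exactly the claimed formula. **Finally**, since Lemma 2.1 establishes (2) for \emph{all} integers $n$ (the $n<0$ cases were handled via $B^{-1}$ and induction), and since $P^{-1}B^nP = \mathrm{diag}(\lambda_+^n,\lambda_-^n)$ holds for negative $n$ as well (with $\lambda_\pm^n$ interpreted as integer powers, legitimate because $\lambda_\pm \neq 0$ whenever $g \neq 0$), the derivation is valid for every $n \in \mathbb{Z}$, which is precisely the assertion that Binet's formula extends to negative integers.
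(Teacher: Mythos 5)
Your proposal is correct and takes essentially the same route as the paper: conjugate identity (2) by $P$, read off the two scalar equations from the diagonal entries, and eliminate $gR_{n-1}$ --- the paper simply adds the two equations, which is exactly your elimination step since the coefficients of $gR_{n-1}$ are $\pm\sqrt{f^2+4g}$. (Incidentally, this cancellation also makes the argument immune to the missing factor of $f$ in the stated eigenvalues of $C$ in item (iii), which you inherited verbatim.)
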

\begin{proof}
From (2), we can easily get that $P^{-1}APP^{-1}B^{n}P=P^{-1}CPR_{n}+P^{-1}DPgR_{n-1}$. After substitution and equating terms in the preceding matrix equation, we obtain the following system: \\
{\scriptsize $\left\{
\begin{array}
[c]{c}
\frac{1}{2}\left(h(f^2+4g)+(2k-fh)\sqrt{f^2+4\,g}\right)\left(\frac{f+\sqrt{f^2+4\,g}}{2}\right)^n=\frac{R_n}{2}\left(f^2+4g+f\sqrt{f^2+4\,g}\right)
+\frac{gR_{n-1}}{2}\sqrt{f^2+4\,g}\\
\frac{1}{2}\left(h(f^2+4g)-(2k-fh)\sqrt{f^2+4\,g}\right)\left(\frac{f-\sqrt{f^2+4\,g}}{2}\right)^n=\frac{R_n}{2}\left(f^2+4g-f\sqrt{f^2+4\,g}\right)
-\frac{gR_{n-1}}{2}\sqrt{f^2+4\,g}.
\end{array}
\right  .$ } \\

The proof can be easily achieved by adding the two equations in the preceding system.
\end{proof}

\section{  New relations satisfied by the elements of sequence (1)}

In this section, we present new summation
relations that the elements $R_{i}$ of sequence (1) satisfy.

We start with the following result which can be proven by using only the definition of sequence (1).
\begin{theorem} For any integer $n$ and for any complex functions
$f,g,h$ and $k,$  the following two equations, concerning the elements $R_i$ of sequence (1),  are valid:\\ \noindent
\hspace{1cm}  $  \left(  f^{2}-g^2+2g-1\right) \sum\limits_{i=1}^{n}R_{i}^{2}=(R_{n+1}
^{2}-k^{2})-g^{2}(R_{n}^{2}-h^{2})+2Sg,  $
 $$
f( f^{2}-g^2+2g-1)\sum\limits_{i=1}^{n}R_{i}R_{i-1}
=(1-g)(R_{n+1}^{2}-k^{2})+g(g-1+f^{2})(R_{n}^{2}-h^{2})+S(1-f^{2}-g^{2}), \ \ \ \ \ \ \ \ \
$$
where
$ S=\left\{
\begin{array}
[c]{c}
\left(  k^{2}-gh^{2}-fhk\right)  \dfrac{1-(-g)^{n}}{1+g}\text{ \ \ \ if
\ \ \ }g\neq-1\\
n\left(  k^{2}-gh^{2}-kht\right)  \text{ \ \ \ \ \ \ \ \ \ \ \ \ \ \  if
\ \ \ }g=-1.
\end{array}
\right.
$
\end{theorem}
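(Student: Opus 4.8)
The plan is to establish both summation identities directly from the recurrence $R_{i+1}=fR_i+gR_{i-1}$ together with the auxiliary identity (3), summing telescoping expressions rather than invoking the matrix machinery of Section 2. First I would introduce the abbreviation $T_i:=R_i^2-gR_{i-1}^2-fR_iR_{i-1}$, so that Lemma 1.2 gives $T_i=(k^2-gh^2-fhk)(-g)^{i-1}$ for every integer $i$; summing this geometric-type quantity over $i=1,\dots,n$ is exactly what produces the closed form for $S$, with the split into $g\neq-1$ and $g=-1$ accounting for the ordinary geometric sum versus the degenerate constant sum. This is the cleanest source for the $S$ term, so I would pin it down before touching the main sums.

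For the first identity I would start from the square of the recurrence, $R_{i+1}^2=f^2R_i^2+2fgR_iR_{i-1}+g^2R_{i-1}^2$, and look for a linear combination of the two sequences $R_i^2$ and $R_iR_{i-1}$ that telescopes. The natural move is to consider the differences $R_{i+1}^2-k^2$ and $R_n^2-h^2$ as telescoping sums $\sum(R_{i+1}^2-R_i^2)$ and $\sum(R_i^2-R_{i-1}^2)$, and to express each summand via the recurrence so that the cross term $R_iR_{i-1}$ and the quadratic term $R_i^2$ appear with controllable coefficients. I expect that after writing $R_{i+1}^2-R_i^2$ in terms of $R_i^2$, $R_iR_{i-1}$, and $R_{i-1}^2$, and similarly handling the shifted sum, one can eliminate the $R_iR_{i-1}$ sum (or isolate it) and collect the $\sum R_i^2$ terms; the coefficient $f^2-g^2+2g-1$ on the left should emerge as precisely the combination that survives after the telescoping and the substitution of $T_i$ for the leftover $R_i^2-gR_{i-1}^2-fR_iR_{i-1}$ pieces. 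The appearance of $f^2-g^2+2g-1=(f^2-(g-1)^2)=(f-g+1)(f+g-1)$ strongly suggests the identity is built from two telescoping relations, one scaled by $1$ and one by $g$, combined to cancel the unwanted boundary and mixed terms.

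For the second identity I would run the analogous computation keyed to the product $R_iR_{i-1}$ rather than the square. Multiplying the recurrence $R_{i+1}=fR_i+gR_{i-1}$ by $R_i$ gives $R_{i+1}R_i=fR_i^2+gR_iR_{i-1}$, which already links the two sums I am after; summing this and also summing the previously derived square-relation lets me set up a small linear system in the two unknowns $\sum R_i^2$ and $\sum R_iR_{i-1}$. The stated right-hand side, with its coefficients $(1-g)$, $g(g-1+f^2)$, and $(1-f^2-g^2)$ multiplying the boundary blocks $R_{n+1}^2-k^2$, $R_n^2-h^2$, and $S$, is what one obtains after solving that system and substituting the closed form for $S$; the factor $f(f^2-g^2+2g-1)$ on the left is the determinant-like quantity that comes out when eliminating $\sum R_i^2$ from the two equations.

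The main obstacle I anticipate is purely bookkeeping: organizing the telescoping so that every stray boundary term ($R_0,R_1,R_n,R_{n+1}$ evaluated as $h,k,R_n,R_{n+1}$) lands correctly and the $S$-term is inserted with the right sign and coefficient. There is no conceptual difficulty once $T_i$ and its sum are fixed, but keeping the coefficients of $R_{i+1}^2-k^2$, $R_n^2-h^2$, and $S$ straight through the elimination step is where errors would creep in, and I would verify the final coefficients by testing a simple case such as the Fibonacci parameters $f=g=1$ (where $f^2-g^2+2g-1=1$) before declaring the computation complete.
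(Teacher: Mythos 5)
Your proposal follows essentially the same route as the paper: sum the squared recurrence and shift indices to obtain one linear relation between $\sum R_i^2$ and $\sum R_iR_{i-1}$, sum identity (3) over $i=1,\dots,n$ to obtain a second relation together with the geometric evaluation of $S$ (split at $g=-1$), and solve the resulting $2\times 2$ system for both sums at once. The one caution is your suggested second equation for the product identity, $\sum R_{i+1}R_i=f\sum R_i^2+g\sum R_iR_{i-1}$, which introduces the boundary term $R_{n+1}R_n$ that does not appear in the target formula; the paper instead reuses the summed form of (3) as the second equation of the system --- which you already set up for the first identity --- so solving that single system directly yields both stated formulas.
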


\begin{proof}
It is easy to see that
\begin{eqnarray}
\sum\limits_{i=1}^{n}R_{i+1}^{2}=\sum\limits_{i=1}^{n}(fR_{i}+gR_{i-1})^{2}=f^{2}\sum_{i=1}^{n}R_{i}^{2}+g^{2}
\sum\limits_{i=1}^{n}R_{i-1}^{2}+2fg\sum\limits_{i=1}^{n}R_{i}R_{i-1}.
\end{eqnarray}
Now if we let $x_n=\sum\limits_{i=1}^{n}R_{i+1}^{2}$ and $y_n=\sum\limits_{i=1}
^{n}R_{i}R_{i-1},$ then it is easy to see that we can write $\sum\limits_{i=1}^{n}R_{i}
^{2}=x_n+k^{2}-R_{n+1}^{2}$ and $\sum\limits_{i=1}^{n}R_{i-1}^{2}=x_n+h^{2}
+k^{2}-R_{n}^{2}-R_{n+1}^{2}.$ Substituting these in (6) and then rearranging
the terms, we get
\begin{eqnarray}
x_n(1-f^{2}-g^{2})-2fgy_n=f^{2}(k^{2}-R_{n+1}^{2})+g^{2}(h^{2}+k^{2}-R_{n}
^{2}-R_{n+1}^{2}).
\end{eqnarray}
Making use of (3) of Lemma 2.2 in (6), we get $$\sum\limits_{i=1}^{n}R_{i}
^{2}-g\sum\limits_{i=1}^{n}R_{i-1}^{2}-f\sum\limits_{i=1}^{n}R_{i}R_{i-1}
=\sum\limits_{i=1}^{n}\left(  k^{2}-gh^{2}-fhk\right)  (-g)^{i-1}.$$ After
arranging terms, we obtain
\begin{eqnarray}
(1-g)x_n-fy_n=S-(k^{2}-R_{n+1}^{2})+g(h^{2}+k^{2}-R_{n}^{2}-R_{n+1}^{2}),
\end{eqnarray} where \begin{align*} S &=\sum\limits_{i=1}^{n}\left(  k^{2}-gh^{2}-fhk\right)  (-g)^{i-1}\\
&=\left\{
\begin{array}
[c]{c}
\left(  k^{2}-gh^{2}-fhk\right)  \dfrac{1-(-g)^{n}}{1+g}\text{ \ \ \ if
\ \ \ }g\neq-1\\
n\left(  k^{2}-gh^{2}-kht\right)  \text{ \ \ \ \ \ \ \ \ \ \ \ \ \ \ \  if
\ \ \ }g=-1.
\end{array}
\right.
\end{align*}   Solving the system (7) and (8) for $x_n$ and $y_n$, we then obtain two
equations where the first is in $x_n$ and is given by $$\left(  f-g+1\right)
\left(  f+g-1\right)  x_n=(f^{2}+2g-g^{2})(R_{n+1}^{2}-k^{2})-g^{2}(R_{n}
^{2}-h^{2})+2Sg,$$ and the second equation involves only $y_n$ and is given by
$$f\left(  f-g+1\right)  \left(  f+g-1\right)  y_n=(1-g)(R_{n+1}^{2}
-k^{2})+g(g-1+f^{2})(R_{n}^{2}-h^{2})+S(1-f^{2}-g^{2}).$$ This completes the proof.
\end{proof}

Another different relation involving products of  elements of sequence (1), is given in the next theorem (for similar results of this kind, see, for example,  \cite{yt}).
\begin{theorem}
For any integers $n$ and $i$ in $\mathbb{Z}$, the elements $R_{n}$ of sequence (1) satisfy the following:
\begin{multline}
(k^{2}-fkh-gh^{2})R_{i+n}+(f^{2}h-fk+gh)R_{i}R_{n}+ hg^{2}R_{i-1}R_{n-1}+(fh-k)g\left ( R_{n}R_{i-1}+R_{i}R_{n-1}\right)=0.
\end{multline}
\end{theorem}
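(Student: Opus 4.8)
The plan is to fix the index $n$, regard the left-hand side of (9) as a function of the single index $i$, and call it $\Phi(i)$; explicitly $\Phi(i)=(k^{2}-fkh-gh^{2})R_{i+n}+(f^{2}h-fk+gh)R_{i}R_{n}+hg^{2}R_{i-1}R_{n-1}+(fh-k)g(R_{n}R_{i-1}+R_{i}R_{n-1})$. I would first show that $\Phi$ itself satisfies the governing recurrence in $i$, that is $\Phi(i+1)=f\Phi(i)+g\Phi(i-1)$, and then check that $\Phi$ vanishes at two consecutive values of $i$. A two-sided induction (forward directly from the recurrence, backward using $g\neq0$) then forces $\Phi\equiv0$ for every integer $i$; since $n$ is arbitrary, this is exactly (9).

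For the recurrence-preservation step I would substitute $R_{i+n+1}=fR_{i+n}+gR_{i+n-1}$, $R_{i+1}=fR_{i}+gR_{i-1}$ and $R_{i}=fR_{i-1}+gR_{i-2}$ into $\Phi(i+1)$ and verify that each of the five groups of terms reassembles into the matching combination inside $f\Phi(i)+g\Phi(i-1)$. The instructive cases are the product terms that carry a shifted $i$-index: for instance, the contributions $f\cdot hg^{2}R_{i-1}R_{n-1}$ and $g\cdot hg^{2}R_{i-2}R_{n-1}$ coming respectively from $f\Phi(i)$ and $g\Phi(i-1)$ collapse, via $R_{i}=fR_{i-1}+gR_{i-2}$, to the single term $hg^{2}R_{i}R_{n-1}$ present in $\Phi(i+1)$; the symmetric cross term $(fh-k)g(R_{n}R_{i-1}+R_{i}R_{n-1})$ behaves the same way. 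Throughout, the factors carrying the fixed index $n$ are mere spectators, so only the recurrence in the $i$-slot is invoked.

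It then remains to verify two base cases. For $i=1$, inserting $R_{0}=h$ and $R_{1}=k$, I expect the coefficients of $R_{n+1}$, $R_{n}$ and $R_{n-1}$ to collapse to $\delta$, $-f\delta$ and $-g\delta$ respectively, where $\delta=k^{2}-fkh-gh^{2}$, so that $\Phi(1)=\delta(R_{n+1}-fR_{n}-gR_{n-1})=0$ by the defining recurrence. For $i=0$, inserting $R_{0}=h$ together with $R_{-1}=g^{-1}(k-fh)$ (computed in the proof of Lemma 2.1), the coefficients of $R_{n}$ and of $R_{n-1}$ should each cancel identically, giving $\Phi(0)=0$; this is precisely where $g\neq0$ is genuinely used, since $R_{-1}$ carries a factor $g^{-1}$.

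I expect the bookkeeping in the recurrence-preservation step to be the only real obstacle: one must keep the running index $i$ and the fixed index $n$ strictly separated and confirm that the two mixed product terms recombine correctly. An alternative route would substitute the generalized Binet formula of Corollary 3.1, writing $R_{m}=c_{+}\alpha^{m}+c_{-}\beta^{m}$ with $\alpha=\frac{f+\sqrt{f^{2}+4g}}{2}$ and $\beta=\frac{f-\sqrt{f^{2}+4g}}{2}$, and then collecting the coefficients of the four independent exponentials $\alpha^{i+n}$, $\beta^{i+n}$, $\alpha^{i}\beta^{n}$ and $\alpha^{n}\beta^{i}$; each coefficient vanishes after using $\alpha+\beta=f$, $\alpha\beta=-g$ and the identity $c_{+}c_{-}=-(k^{2}-fkh-gh^{2})/(f^{2}+4g)$. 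This works whenever $f^{2}+4g\neq0$, but the inductive argument above has the advantage of covering the degenerate case $f^{2}+4g=0$ as well.
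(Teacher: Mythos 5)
Your proof is correct, and it takes a genuinely different route from the paper. The paper derives (9) from the commuting-matrix identity $AB^{n}AB^{i}=A^{2}B^{n+i}$, which via Lemma 2.1 becomes $(CR_{n}+DgR_{n-1})(CR_{i}+DgR_{i-1})=A(CR_{n+i}+DgR_{n+i-1})$; comparing entries yields two scalar relations, each carrying a factor $g(4g+f^{2})$, and eliminating $(fh-k)R_{n+i-1}$ between them gives (9). Your argument instead fixes $n$ and observes that $\Phi(i)$ is a linear combination, with $i$-independent coefficients, of the three sequences $i\mapsto R_{i+n}$, $i\mapsto R_{i}$, $i\mapsto R_{i-1}$, each of which solves the recurrence in $i$ --- so the laborious term-by-term reassembly you anticipate is really just linearity of the solution space --- and then checks $\Phi(1)=\delta\,(R_{n+1}-fR_{n}-gR_{n-1})=0$ and $\Phi(0)=0$ (both of which I verified), after which two-sided induction with $g\neq 0$ forces $\Phi\equiv 0$. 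What each approach buys: the paper's computation explains where the identity comes from (the semigroup property of $B^{n}$ intertwined with Lemma 2.1), but as written it divides by $4g+f^{2}$ and so silently assumes $f^{2}+4g\neq 0$, even though the theorem is stated for arbitrary $f\neq0$, $g\neq 0$; your induction needs no such hypothesis and therefore actually covers the degenerate case $f^{2}+4g=0$ that the paper's proof skips, a point you correctly flag when dismissing the Binet-formula alternative.
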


\begin{proof}
Since $A$ and $B$ commute, then for any integers $i$ and $n$, we obtain the matrix equation $AB^nAB^i=A^2B^{n+i}$.
 By (2), % $AB^k=CR_k+DgR_{k-1}$ for any integer $k$
    this last equation can be rewritten as $$  (CR_n+DgR_{n-1})(CR_i+DgR_{i-1})=A(CR_{n+i}+DgR_{n+i-1}).$$ After substitution, we obtain following two equations:
$$g\left(  4g+f^{2}\right)  \left(  R_{i}R_{n}-hR_{n+i}-kR_{n+i-1}+gR_{i-1}R_{n-1}+fhR_{n+i-1}\right)=0$$
 $$g\left(  4g+f^{2}\right)  \left(-kR_{n+i}+gR_{n}R_{i-1}+ gR_{i}R_{n-1}+fR_{i}R_{n}-ghR_{n+i-1}\right)=0.$$
   Since $4g+f^2\neq 0,$ then we conclude the following two equations:
$$gh\left(  R_{i}R_{n}-hR_{n+i}+gR_{i-1}R_{n-1}+(fh-k)R_{n+i-1}\right)  =0,$$
$$(fh-k)(-kR_{n+i}+gR_{n}R_{i-1}+gR_{i}R_{n-1}+fR_{i}R_{n}-ghR_{n+i-1})=0. $$
Finding the value of $(fh-k)R_{n+i-1}$ from the first equation and substituting it in the second equation, we obtain (9).
\end{proof}

\vspace{1cc}

\section*{Appendix}
\subsection*{Computational simulations}
In order to show the efficiency of our method, we shall evaluate the performance of our technique against Binet's method for the same calculation by giving  an overview on the rough computational effort of each method for particular values of $n$. More precisely, we claim that
one of the striking and perhaps surprising features of each formula in the preceding corollary is that it has a lower computational cost comparing with that  of Binet's formula.  Without loss of generality, we shall only perform simulations  for Fibonacci and Lucas numbers as it can be easily done  for the rest in a similar fashion.
First, recall that  Binet's formulas for  Fibonacci and Lucas numbers are respectively given by
$$ FB(n) :=\frac{1}{\sqrt{5}}\left[\left(\frac{1+\sqrt{5}}{2}\right)^n-\left(\frac{1-\sqrt{5}}{2}\right)^n\right],$$
$$ LB(n) :=\left[\left(\frac{1+\sqrt{5}}{2}\right)^n+\left(\frac{1-\sqrt{5}}{2}\right)^n\right].$$
We shall compare these formulas with the following formulas from the preceding corollary:
$$ FO(n):=\sqrt{\frac{(-1)^{n+1}}{5} \left|  \left[\begin{array}[c]{cc}
-1 &  0\\
0 &  -1
\end{array}
\right]  +\left[
\begin{array}
[c]{cc}
-2 &  1\\
1 &   -1
\end{array}
\right]  ^{n}\right|  }$$
 $$LO(n):=\sqrt{(-1)^{n}\left|  \left[
\begin{array}
[c]{cc}
1 &  0\\
0 &  1
\end{array}
\right]  +\left[
\begin{array}
[c]{cc}
-2 &  1\\
1 &  -1
\end{array}
\right]  ^{n}\right|  }. $$

%%%%%%%%%%%%%%%%%%%%%%%%%%%%%%%%%%%%%%%%%%%%%%%%%%%
We shall do the comparison by using Maple. In fact, we will start by
 applying the command ``GreaterComplexity'' for $n=10$ and then for $n=10^6$ that compares the complexity of Binet 's formula against our formula for Lucas numbers. Then, we go into more details by using the command ``cost''
  for $n=10, \ 100,\ 1000,  \ 100000$.  Though it will not be used here, it is worthy to note that one can also use the command ``SortByComplexity'' for the same purpose.
Indeed, the following Maple 18 session proves our claim for Fibonacci and Lucas numbers.\\
$
>with(LinearAlgebra):\\
>with(SolveTools):\\
>with(codegen, cost, optimize):$\\
$>A := Matrix([[-2, 1], [1, -1]]); M := Matrix([[1, 0], [0, 1]]);$
 $$A:=\left[
\begin{array}
[c]{cc}
-2 &  1\\
1 &  -1
\end{array}
\right]$$
$$M:= \left[
\begin{array}
[c]{cc}
1 &  0\\
0 &  1
\end{array}
\right]$$
%%%%%%%%%%%%%%%%%%%%
$>GreaterComplexity\left(\left(\frac{1+5^{1/2}}{2}\right)^{10}+\left(\frac{1-5^{1/2}}{2}\right)^{10}, \left(Determinant\left(M+A^{10}\right)\right)^{1/2}\right);$
$$true$$
%%%%%%%%%%%%%%%%%%%%
$>GreaterComplexity\left(\left(\frac{1+5^{1/2}}{2}\right)^{1000000}+\left(\frac{1-5^{1/2}}{2}\right)^{1000000}, \left(Determinant\left(M+A^{1000000}\right)\right)^{1/2}\right);$
$$true$$
%%%%%%%%%%%%%%%%%%%%
$>\emph{\textbf{For \ n=10}}\\
>FB(10) := \frac{1}{5^{1/2}}.\left[\left(\frac{1+5^{1/2}}{2}\right)^{10}-\left(\frac{1-5^{1/2}}{2}\right)^{10}\right]; expand(\%);\\
 LB(10) := \left(\frac{1+5^{1/2}}{2}\right)^{10}+\left(\frac{1-5^{1/2}}{2}\right)^{10}; expand(\%); cost(FB(10)); cost(LB(10));$
$$ FB(10) :=\frac{1}{\sqrt{5}}\left[\left(\frac{1+\sqrt{5}}{2}\right)^{10}-\left(\frac{1-\sqrt{5}}{2}\right)^{10}\right]$$
$$55$$
$$ LB(10) :=\left(\frac{1+\sqrt{5}}{2}\right)^{10}+\left(\frac{1-\sqrt{5}}{2}\right)^{10}$$
$$123$$
$$25 \ multiplications+3 \ additions+6 \ functions$$
$$22 \ multiplications+3 \ additions+4 \ functions$$
$>FO(10) := \left(\frac{-1}{5}.Determinant\left(-M+A^{10}\right)\right)^{1/2}: simplify(\%);\\
 LO(10) := \left(Determinant\left(M+A^{10}\right)\right)^{1/2}: simplify(\%);cost(FO(10)); cost(LO(10));$
$$55$$
$$123$$
$$2 \ functions + \ multiplications $$
$$2 \ functions + \ multiplications $$
%%%%%%%%%%%%%%%%%%%%%%%%%%%%%%%%%%%%%%
$>\textbf{\emph{For \ n=100}}\\
>FB(100) := \frac{1}{5^{1/2}}.\left[\left(\frac{1+5^{1/2}}{2}\right)^{100}-\left(\frac{1-5^{1/2}}{2}\right)^{100}\right]; expand(\%);
 LB(100) :=\\ \left(\frac{1+5^{1/2}}{2}\right)^{100}+\left(\frac{1-5^{1/2}}{2}\right)^{100}; expand(\%); cost(FB(100)); cost(LB(100));$
$$ FB(100) :=\frac{1}{\sqrt{5}}\left[\left(\frac{1+\sqrt{5}}{2}\right)^{100}-\left(\frac{1-\sqrt{5}}{2}\right)^{100}\right]$$
$$354224848179261915075$$
$$ LB(100) :=\left(\frac{1+\sqrt{5}}{2}\right)^{100}+\left(\frac{1-\sqrt{5}}{2}\right)^{100}$$
$$792070839848372253127$$
$$205 \ multiplications+3 \ additions+6 \ functions$$
$$202 \ multiplications+3 \ additions+4 \ functions$$
$>FO(100) := \left(\frac{-1}{5}.Determinant\left(-M+A^{100}\right)\right)^{1/2}: simplify(\%);
 LO(100) :=\\ \left(Determinant\left(M+A^{100}\right)\right)^{1/2}: simplify(\%); cost(FO(100)); cost(LO(100));$
$$354224848179261915075$$
$$792070839848372253127$$
$$2 \ functions + \ multiplications $$
$$2 \ functions + \ multiplications $$
%%%%%%%%%%%%%%%%%%%%%%%%%%%%%%%%%%%%%%
$>\emph{\textbf{For \ n=1000}}\\
>FB(1000) := \frac{1}{5^{1/2}}.\left[\left(\frac{1+5^{1/2}}{2}\right)^{1000}-\left(\frac{1-5^{1/2}}{2}\right)^{1000}\right]; expand(\%);
 LB(1000) :=\\ \left(\frac{1+5^{1/2}}{2}\right)^{1000}+\left(\frac{1-5^{1/2}}{2}\right)^{1000}; expand(\%); cost(FB(1000)); cost(LB(1000));$
$$ FB(1000) :=\frac{1}{\sqrt{5}}\left[\left(\frac{1+\sqrt{5}}{2}\right)^{1000}-\left(\frac{1-\sqrt{5}}{2}\right)^{1000}\right]$$
\begin{multline*}
4346655768693745643568852767504062580256466051737178040248172908953655541794\\
9051890403879840079255169295922593080322634775209689623239873322471161642996 \ \ \\
440906533187938298969649928516003704476137795166849228875 \ \ \ \ \ \ \ \ \ \ \ \ \ \ \ \ \ \ \ \ \ \ \ \ \ \ \ \ \ \ \
\end{multline*}
$$ LB(1000) :=\left(\frac{1+\sqrt{5}}{2}\right)^{1000}+\left(\frac{1-\sqrt{5}}{2}\right)^{1000}$$
\begin{multline*}
97194177735908175207981982079326473737797879155345685082728081084772518818444\\
81526908061914904596829767957830540320934740116303690766057397174086246375180 \ \ \\
1641201490284097309096322681531675707666695323797578127  \ \ \ \ \ \ \ \ \ \ \ \ \ \ \ \ \ \ \ \ \ \ \ \ \ \ \ \ \ \ \ \ \ \ \
\end{multline*}
$$2005 \ multiplications+3 \ additions+6 \ functions$$
$$2002 \ multiplications+3 \ additions+4 \ functions$$
$>FO(1000) := \left(\frac{-1}{5}.Determinant\left(-M+A^{1000}\right)\right)^{1/2}: simplify(\%);
 LO(1000) :=\\ \left(Determinant\left(M+A^{1000}\right)\right)^{1/2}: simplify(\%); cost(FO(1000)); cost(LO(1000));$
\begin{multline*}
4346655768693745643568852767504062580256466051737178040248172908953655541794\\
9051890403879840079255169295922593080322634775209689623239873322471161642996 \ \ \\
440906533187938298969649928516003704476137795166849228875 \ \ \ \ \ \ \ \ \ \ \ \ \ \ \ \ \ \ \ \ \ \ \ \ \ \ \ \ \ \ \
\end{multline*}
\begin{multline*}
97194177735908175207981982079326473737797879155345685082728081084772518818444\\
81526908061914904596829767957830540320934740116303690766057397174086246375180 \ \ \\
1641201490284097309096322681531675707666695323797578127  \ \ \ \ \ \ \ \ \ \ \ \ \ \ \ \ \ \ \ \ \ \ \ \ \ \ \ \ \ \ \ \ \ \ \
\end{multline*}
$$2 \ functions + \ multiplications $$
$$2 \ functions + \ multiplications $$
%%%%%%%%%%%%%%%%%%%%%%%%%%%%%%%%%%%%%%%%%%%%%%%%%%%
$>\emph{\textbf{For \ n=100000}}\\
>FB(100000) := \frac{1}{5^{1/2}}.\left[\left(\frac{1+5^{1/2}}{2}\right)^{100000}-\left(\frac{1-5^{1/2}}{2}\right)^{100000}\right]; expand(\%);
 LB(100000) :=\\ \left(\frac{1+5^{1/2}}{2}\right)^{100000}+\left(\frac{1-5^{1/2}}{2}\right)^{100000}; expand(\%); cost(FB(100000)); cost(LB(100000));$
$$ FB(100000) :=\frac{1}{\sqrt{5}}\left[\left(\frac{1+\sqrt{5}}{2}\right)^{100000}-\left(\frac{1-\sqrt{5}}{2}\right)^{100000}\right]$$
\begin{multline*}
25974069347221724166155034021275915414880485386517696584724770703952534543511\\
27368626555677283671674[...20699 digits...]9259130435572321635660895603514383883939\\
018953166274355609970015699780289236362349895374653428746875 \ \ \ \ \ \ \ \ \ \ \ \ \ \ \ \ \ \ \ \ \ \ \ \ \ \
\end{multline*}
$$ LB(100000) :=\left(\frac{1+\sqrt{5}}{2}\right)^{100000}+\left(\frac{1-\sqrt{5}}{2}\right)^{100000}$$
\begin{multline*}
580797847126813635602250680046023872205978286661269934813695510400141800856342\\
5084641321277250355647[...20699 digits...]083642318157867798447687871806108202216120\\
3529764821597059303847402451379477857726676584685986328127 \ \ \ \ \ \ \ \ \ \ \ \ \ \ \ \ \ \ \ \ \ \ \ \ \ \ \ \ \ \
\end{multline*}
$$200005 \ multiplications+3 \ additions+6 \ functions$$
$$200002 \ multiplications+3 \ additions+4 \ functions$$
$>FO(100000) := \left(\frac{-1}{5}.Determinant\left(-M+A^{100000}\right)\right)^{1/2}: simplify(\%);
 LO(100000) :=\\ \left(Determinant\left(M+A^{100000}\right)\right)^{1/2}: simplify(\%); cost(FO(100000)); cost(LO(100000));$
\begin{multline*}
25974069347221724166155034021275915414880485386517696584724770703952534543511\\
27368626555677283671674[...20699 digits...]9259130435572321635660895603514383883939\\
018953166274355609970015699780289236362349895374653428746875 \ \ \ \ \ \ \ \ \ \ \ \ \ \ \ \ \ \ \ \ \ \ \ \ \ \
\end{multline*}
\begin{multline*}
580797847126813635602250680046023872205978286661269934813695510400141800856342\\
5084641321277250355647[...20699 digits...]083642318157867798447687871806108202216120\\
3529764821597059303847402451379477857726676584685986328127 \ \ \ \ \ \ \ \ \ \ \ \ \ \ \ \ \ \ \ \ \ \ \ \ \ \ \ \ \ \
\end{multline*}
$$2 \ functions + \ multiplications $$
$$2 \ functions + \ multiplications $$


\begin{thebibliography}{99}

\bibitem{ca} \'A. Carmona, A.M. Encinas, S. Gago, S. Jim\'enez and M. Mitjana, \textit{The inverses of some circulant matrices,} Applied Mathematics and Computations 270 (2015), pp. 785-793.
 \bibitem{ho} A. F. Horadam,  \textit{Jacobsthal Representation Numbers,}  Fib. Quart. 34 (1996), pp. 40-54.
 \bibitem{kos} T. Koshy, \textit{Fibonacci and Lucas Numbers with Applications,} Wiley, New York, 2001.
 \bibitem{kosh} T. Koshy, \textit{Pell and Pell-Lucas Numbers with Applications,} Springer, New York, 2014.
\bibitem{na}  A. Nalli and  P. Haukkanen, \textit{On generalized Fibonacci and Lucas polynomials,} Chaos, Solitons \& Fractals 42 (2009), pp. 3179-3186.
\bibitem{yt} Y. Yazlik and  N. Taskara, \textit{A note on generalized k-Horadam sequence,} Computers and mathematics with Applications,  63 (2012), pp. 36-41.
\end{thebibliography}
\end{document}